\documentclass[11pt,a4paper]{article}

\usepackage[T1]{fontenc}
\usepackage{lmodern}
\usepackage[margin=1in]{geometry}
\usepackage{amsmath,amssymb,amsthm,mathtools,microtype}
\usepackage[colorlinks=true,linkcolor=blue,urlcolor=blue,citecolor=blue]{hyperref}
\usepackage[nameinlink,noabbrev]{cleveref}
\usepackage{enumitem}

\allowdisplaybreaks
\setlist[itemize]{topsep=3pt,itemsep=2pt}
\numberwithin{equation}{section}

\newtheorem{theorem}{Theorem}[section]

\newtheorem{lemma}[theorem]{Lemma}

\theoremstyle{remark}
\newtheorem{remark}[theorem]{Remark}

\newcommand{\HH}{\mathcal H}
\newcommand{\KK}{\mathcal K}
\newcommand{\cA}{\mathcal A}
\newcommand{\DD}{\mathbb D}
\newcommand{\TT}{\mathbb T}
\newcommand{\CC}{\mathbb C}
\newcommand{\norm}[1]{\left\lVert #1\right\rVert}
\newcommand{\ip}[2]{\left\langle #1,#2\right\rangle}
\newcommand{\Ree}{\operatorname{Re}}

\title{On the abstract approach to spectral constants:\\
  a proof of the Clou\^atre--Ostermann--Ransford conjecture}
\author{Catalin Badea, Ryan O'Loughlin, and Jani Virtanen}
\date{1 September 2026}

\begin{document}

\maketitle

\begin{abstract}
Clou\^atre, Ostermann, and Ransford formulated an abstract version of
Crouzeix's conjecture involving a bounded unital homomorphism from a
uniform algebra into matrices and a
unital antilinear map.  They conjectured that contractivity of the
associated symmetrised map forces the homomorphism to have norm at most
two.  We prove this conjecture, in fact without requiring the antilinear
map to be contractive and for homomorphisms into the bounded operators on a Hilbert space. The proof
combines positivity of real parts, an operator-valued Herglotz theorem, and the perturbation lemma of Lorist and Schwenninger used in the recent proof of Crouzeix's conjecture.
\end{abstract}

\medskip
\noindent\textit{2020 Mathematics Subject Classification.}
Primary 47A25; Secondary 46J10, 47A12, 47L30.

\noindent\textit{Keywords.}
Crouzeix's conjecture, spectral set, uniform algebra, Herglotz
representation, dilation.

\section{Introduction}

Let \(T\in B(\HH)\), where \(\HH\) is a complex Hilbert space.  Its
\emph{numerical range} is
\[
 W(T)=\{\ip{Tx}{x}:x\in\HH,\ \norm{x}=1\}.
\]
%We take inner products to be linear in the first variable.  
The closure
\(\overline{W(T)}\) is a compact convex set containing \(\sigma(T)\), the spectrum of $T$.
Crouzeix's conjecture asserts that it is a \(2\)-spectral set for \(T\):
\begin{equation}
 \norm{r(T)}
 \leq 2\sup_{z\in W(T)}|r(z)|
 \label{eq:crouzeix}
\end{equation}
for every rational function \(r\) whose poles lie outside
\(\overline{W(T)}\).  Since \(\overline{W(T)}\) is convex, the rational
formulation is equivalent to the polynomial one by standard polynomial
approximation (Runge's theorem).

Delyon and Delyon proved that \(\overline{W(T)}\) is always a
\(C\)-spectral set, with \(C\) depending on the geometry of the numerical
range \cite{DelyonDelyon}.  Crouzeix proved the conjectured estimate for
\(2\times2\) matrices (see also \cite{BadeaCrouzeixDelyon}) and formulated the general conjecture
in \cite{CrouzeixIEOT}.  He subsequently
obtained the first universal bound, with constant \(11.08\)
\cite{CrouzeixJFA}.  Crouzeix and Palencia later reduced the universal
constant to \(1+\sqrt2\) \cite{CrouzeixPalencia}.  The constant two is
optimal, as is already shown by the nilpotent matrix
\(
 \left(\begin{smallmatrix}0&1\\0&0\end{smallmatrix}\right)
\).

Crouzeix's conjecture has attracted considerable attention. For background material and an account of earlier developments, see \cite{BadeaBeckermann}\footnote{A second edition of this survey is in preparation.} and \cite{BickelEtAl}.
At the time of writing, three independent proofs of the scalar
conjecture have recently appeared, due to Jin \cite{Jin}, Lorist and
Schwenninger \cite{LoristSchwenninger}, and Luo \cite{Luo}.  The
completely bounded, or matrix-valued, analogue remains open.

The purpose of this note is to settle a conjecture of Clou\^atre,
Ostermann, and Ransford \cite[Conjecture~1.2]{COR}, which arose from their
abstract approach to Crouzeix's conjecture.  The original conjecture
concerns a continuous unital homomorphism
\(\theta:\cA\to M_n(\CC)\) on a uniform algebra and a unital antilinear
contraction \(\alpha:\cA\to\cA\).  It predicts that
\[
 \left\|\frac12\bigl(\theta(\,\cdot\,)
       +\theta(\alpha(\,\cdot\,))^*\bigr)\right\|\leq1
 \quad\Longrightarrow\quad
 \norm{\theta}\leq2.
\]
Clou\^atre, Ostermann, and Ransford proved the bound \(1+\sqrt2\) without
the unitality assumption on \(\alpha\), and showed that this constant is
sharp in that setting.  Their extremal example satisfies
\(\alpha(1)=-1\).  They also proved that their conjecture implies
Crouzeix's conjecture.

Jin proved the abstract assertion when the finite-dimensional
commutative algebra \(\theta(\cA)\) is semisimple \cite{Jin}.  Lorist and
Schwenninger \cite{LoristSchwenninger} obtained a related abstract result when the symmetrised map
is completely positive; see \cite{PaulsenBook} for this notion.  Hartz and
McCarthy recently answered a related completely bounded question in the
special case of scalar-valued correction terms \cite{HartzMcCarthy}.

Our proof relies on the perturbation lemma introduced by Lorist and Schwenninger, which was the main new ingredient in their proof of Crouzeix's conjecture. The only other ingredients are two standard results: a unital contractive linear map defined on a uniform algebra preserves positivity of real parts, and every operator-valued function with positive real part admits a Herglotz--Naimark dilation.

\section{The abstract sharp bound}

Recall that a \emph{uniform algebra} on a compact Hausdorff space \(X\)
is a unital norm-closed subalgebra \(\cA\) of \(C(X)\), equipped with the
uniform norm.  All homomorphisms in this paper are complex-linear and
unital.  An antilinear map \(\alpha\) is called unital when
\(\alpha(1)=1\).

The following theorem is slightly stronger than the original
Clou\^atre--Ostermann--Ransford conjecture: the Hilbert space need not be
finite-dimensional, and \(\alpha\) need only be bounded, rather than
contractive.

\begin{theorem}[Abstract sharp bound]
\label{thm:abstract-sharp}
Let \(\cA\subset C(X)\) be a uniform algebra, let
\[
 \theta:\cA\longrightarrow B(\HH)
\]
be a bounded unital homomorphism, and let
\[
 \alpha:\cA\longrightarrow\cA
\]
be a bounded unital antilinear map.  Define
\begin{equation}
 \Phi(h)
 =\frac12\bigl(\theta(h)+\theta(\alpha(h))^*\bigr),
 \qquad h\in\cA.
 \label{eq:symmetrised-map}
\end{equation}
If \(\Phi\) is contractive, then
\begin{equation}
 \norm{\theta}\leq2.
 \label{eq:abstract-two}
\end{equation}
\end{theorem}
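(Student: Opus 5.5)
The plan is to reduce the statement to a dilation identity for powers of a single element and then finish with the Lorist--Schwenninger perturbation lemma.

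\emph{Positivity.} First observe that \(\Phi\) is complex-linear. Indeed, \(h\mapsto\theta(\alpha(h))\) is antilinear, and taking adjoints is antilinear again. Also \(\Phi(1)=\frac12(I+I)=I\) because \(\alpha(1)=1\). So \(\Phi\) is a unital contractive linear map on the uniform algebra \(\cA\). By the standard fact recalled in the introduction, it preserves positivity of real parts: \(\Ree g\geq0\) on \(X\) implies \(\Ree\Phi(g)\geq0\).

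\emph{Herglotz function.} Fix \(h\in\cA\) with \(\norm{h}<1\); by scaling it suffices to show \(\norm{\theta(h)}\leq2\). For \(\lambda\in\DD\) put
\[
 g_\lambda=(1+\lambda h)(1-\lambda h)^{-1}=1+2\sum_{n\geq1}\lambda^n h^n\in\cA .
\]
Then \(\Ree g_\lambda>0\) on \(X\), and \(\lambda\mapsto g_\lambda\) is holomorphic into \(\cA\). Hence \(F(\lambda)=\Phi(g_\lambda)\) is a \(B(\HH)\)-valued holomorphic function on \(\DD\) with \(F(0)=I\) and \(\Ree F\geq0\).

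\emph{Dilation.} The operator-valued Herglotz--Naimark theorem gives a Hilbert space \(\KK\supset\HH\), an isometry \(V:\HH\to\KK\) and a unitary \(U\) on \(\KK\) such that \(F(\lambda)=V^*(1+\lambda U)(1-\lambda U)^{-1}V\). Comparing Taylor coefficients, and using the linearity of \(\Phi\), we get
\[
 \Phi(p(h))=V^*p(U)V\quad\text{for every polynomial }p.
\]
Unpacking the definition of \(\Phi\), this reads
\[
 \theta(p(h))=2V^*p(U)V-\theta\bigl(\alpha(p(h))\bigr)^*.
\]
So the operator \(T=\theta(h)\) satisfies \(p(T)=2V^*p(U)V-C_p^*\), where \(p\mapsto C_p=\theta(\alpha(p(h)))\) is antilinear in \(p\) and bounded by \(\norm{\theta}\,\norm{\alpha}\,\norm{p(h)}\). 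In particular \(\norm{C_p}\leq\norm{\theta}\norm{\alpha}\sup_{\overline\DD}|p|\), and finiteness of this bound is all that is needed from the boundedness of \(\alpha\).

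\emph{Conclusion.} This is precisely the configuration handled by the Lorist--Schwenninger perturbation lemma. There a polynomial calculus equals twice a compression of a unitary calculus, minus an adjoint of an a priori bounded, conjugate-linear correction. The conclusion is that \(\sup_{\norm{p}\leq1}\norm{p(T)}\leq2\). I would apply the lemma with the unitary dilation \((\KK,U,V)\) above and the correction \(p\mapsto C_p\), and take \(p(z)=z\) to obtain \(\norm{\theta(h)}\leq2\). Letting \(\norm{h}\uparrow1\) then gives \eqref{eq:abstract-two}.

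\emph{Main obstacle.} The delicate point is matching the hypotheses of the Lorist--Schwenninger lemma exactly. One must check that the correction term enters only through an adjoint of something controlled by \(\norm{\theta}\) itself, which is finite but not yet known to be at most \(2\). The natural route is a bootstrap. Set \(M=\sup\{\norm{\theta(g)}:\norm{g}\leq1\}\), bound \(\norm{C_p}\leq M\norm{\alpha}\norm{p}\), and use the lemma's mechanism to improve \(M\) to \(\frac12M+1\)-type estimates, or directly to \(2\). I would first try to reproduce the lemma's proof, which tests \(p(T)\) against \(p\) multiplied by suitable inner or Blaschke-type factors and averages, so that the antilinear correction averages out. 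I expect this to be the step requiring the most care, especially since \(\alpha(p(h))\) is not a polynomial in \(\alpha(h)\).
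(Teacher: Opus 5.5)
Your argument matches the paper's up to the dilation identity. Unital contractivity gives positivity of real parts, the Cayley transform of $h$ gives a Herglotz function, and Naimark's theorem plus Taylor coefficients give $\Phi(h^n)=V^*U^nV$. The gap is the final step. You never state or check the hypotheses of the Lorist--Schwenninger lemma, and the version you describe cannot force $\norm{T}\le2$. In that version, a polynomial calculus equals twice a compressed unitary calculus minus an \emph{a priori} bounded antilinear correction. Take $T=\left(\begin{smallmatrix}0&10\\0&0\end{smallmatrix}\right)$, $\KK=\CC^2$ and $V=U=I$. Then $p(T)=p(0)I+p'(0)T$, so $C_p^*:=2V^*p(U)V-p(T)$ satisfies $\norm{C_p}\le13\sup_{\overline\DD}|p|$ by Cauchy's estimate, while $\norm{T}=10$. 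So the bootstrap you sketch (and never specify) is aimed at the wrong target. So is your worry that $\alpha(p(h))$ is not a polynomial in $\alpha(h)$.

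The missing idea is the \emph{commutation} hypothesis of the lemma. Besides $\sup_n\norm{E_n}<\infty$ for $E_n=2V^*Q^{*n}V-T^{*n}$, one needs $E_nT=TE_n$ for all $n\ge1$. In your setting this is immediate. From $\Phi(h^n)=V^*U^nV$ you get $\theta(h)^n=2V^*U^nV-\theta(\alpha(h^n))^*$. Take $T=\theta(h)^*$ and $Q=U^*$, so that $E_n=\theta(\alpha(h^n))^*$. Then $\norm{E_n}\le\norm{\theta}\norm{\alpha}$ uniformly in $n$, and any finite bound suffices, so no bootstrap is needed. Also $E_n$ commutes with $T$, because $\theta(\alpha(h^n))$ and $\theta(h)$ both lie in the commutative algebra $\theta(\cA)$. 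The lemma then yields $\norm{\theta(h)}\le2$ at once. This is exactly the paper's proof. Beyond boundedness and $\alpha(1)=1$, the only property of $\alpha$ it uses is that $\alpha$ takes values in $\cA$, so the defect lies in $\theta(\cA)^*$.
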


The map \(\Phi\) in \eqref{eq:symmetrised-map} is complex-linear: both
\(\alpha\) and the adjoint operation are antilinear.  Taking
\(\HH=\CC^n\) and assuming, as in \cite{COR}, that \(\alpha\) is a
contraction gives the original conjecture immediately.

\section{Proof of the main result}

We begin with two standard positivity facts.

\begin{theorem}[Unital contractions preserve positive real parts]
\label{lem:unital-real-positive}
Let \(\cA\subset C(X)\) be a uniform algebra and let
\(\Phi:\cA\to B(\HH)\) be complex-linear, unital, and contractive.  Then
\[
 \Ree h\geq0\ \text{on }X
 \quad\Longrightarrow\quad
 \Ree\Phi(h)\geq0.
\]
\end{theorem}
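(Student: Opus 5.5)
Since real positivity of an operator is tested against vectors, I will fix a unit vector \(x\in\HH\) and pass to the scalar functional \(\varphi(h)=\ip{\Phi(h)x}{x}\). This \(\varphi\) is complex-linear on \(\cA\), satisfies \(\varphi(1)=\norm{x}^2=1\), and \(|\varphi(h)|\le\norm{\Phi(h)}\le\norm{h}_\infty\). So \(\varphi\) is a linear functional of norm at most one taking the value one at the unit, hence of norm exactly one. The claim \(\Ree\Phi(h)\ge0\) is equivalent to \(\Ree\ip{\Phi(h)x}{x}\ge0\) for every such \(x\), because \(\Ree\ip{Ty}{y}=\ip{(\Ree T)y}{y}\) with \(\Ree T=\frac12(T+T^*)\). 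It therefore suffices to show that \(\Ree\varphi(h)\ge0\) whenever \(\Ree h\ge0\) on \(X\).

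For the scalar statement I will use Hahn--Banach and the Riesz representation theorem. Extend \(\varphi\) to a functional \(\tilde\varphi\) on \(C(X)\) with \(\norm{\tilde\varphi}=1\), and represent it by a complex regular Borel measure \(\mu\) with \(\norm{\mu}=1\) and \(\mu(X)=\tilde\varphi(1)=1\). Then \(1=\mu(X)\le|\mu|(X)=1\), so equality holds in \(|\int d\mu|\le\int d|\mu|\). This forces \(d\mu=u\,d|\mu|\) with \(|u|=1\) and \(\int u\,d|\mu|=1\). Since \(\Ree u\le1\) and \(\int(1-\Ree u)\,d|\mu|=0\), we get \(\Ree u=1\) \(|\mu|\)-almost everywhere, hence \(u=1\) almost everywhere. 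Thus \(\mu\ge0\) is a probability measure. Consequently \(\Ree\varphi(h)=\int_X\Ree h\,d\mu\ge0\) whenever \(\Ree h\ge0\), which proves the theorem.

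The only point requiring care is the measure-theoretic step showing that a norm-one representing measure with total mass one is positive. As an alternative that avoids measures, I could run the classical argument for states. Suppose \(\Ree h\ge0\) but \(\Ree\varphi(h)<0\). For \(t>0\), the function \(1+th\) satisfies \(|1+th|^2=1+2t\Ree h+t^2|h|^2\). This bound alone is not immediately useful, so instead I would use \(\norm{(1+th)^{-1}}_\infty\le1\), where the inverse exists in \(C(X)\) because \(\Ree(1+th)\ge1\). The trouble is that this inverse need not lie in \(\cA\). To avoid that issue, I would use the Cayley-type function \(g_t=(1-th)/(1+th)\) only after approximation, which is less clean. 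For this reason the Hahn--Banach and Riesz route is the one I would commit to. It needs no invertibility inside \(\cA\), since positivity is read off directly from the measure.
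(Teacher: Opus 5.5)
Your proof is correct and follows the same route as the paper: you reduce to the vector functionals \(h\mapsto\ip{\Phi(h)x}{x}\), extend by Hahn--Banach to a unital norm-one functional on \(C(X)\), and use its positivity. The only difference is that you check positivity explicitly via the Riesz representing measure (showing \(\mu=|\mu|\)), whereas the paper simply cites the standard fact that a unital norm-one functional on a \(C^*\)-algebra is a state.
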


\begin{proof}
Fix a unit vector \(x\in\HH\) and define
\[
 \varphi_x(h)=\ip{\Phi(h)x}{x},\qquad h\in\cA.
\]
Since \(\varphi_x(1)=1\) and \(\norm{\varphi_x}\leq1\), we have
\(\norm{\varphi_x}=1\).  By the Hahn--Banach theorem,
\(\varphi_x\) extends to a norm-one functional
\(\widetilde\varphi_x\) on \(C(X)\), still satisfying
\(\widetilde\varphi_x(1)=1\).  Such a functional is a state on the
commutative \(C^*\)-algebra \(C(X)\), and is therefore positive.  Hence,
if \(\Ree h\geq0\), then
\[
 \Ree\ip{\Phi(h)x}{x}
 =\widetilde\varphi_x(\Ree h)
 \geq0.
\]
This holds for every unit vector \(x\), so \(\Ree\Phi(h)\geq0\).
\end{proof}

We shall use the following known dilation form of the operator-valued Herglotz
representation. We give a proof for completeness.

\begin{theorem}[Herglotz--Naimark dilation]
\label{thm:herglotz}
Let \(H:\DD\to B(\HH)\) be analytic and suppose that
\[
 H(0)=I,
 \qquad
 \Ree H(w)\geq0\quad(w\in\DD).
\]
Then there are a Hilbert space \(\KK\), an isometry
\(V:\HH\to\KK\), and a unitary \(U\in B(\KK)\) such that
\begin{equation}
 H(w)
 =V^*(I+wU^*)(I-wU^*)^{-1}V
 =I+2\sum_{n=1}^{\infty}w^nV^*U^{*n}V.
 \label{eq:herglotz-dilation}
\end{equation}
The series converges in operator norm, locally uniformly on \(\DD\).
\end{theorem}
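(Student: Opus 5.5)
I plan to reduce the statement to the Naimark dilation theorem for positive operator-valued measures, equivalently to Stinespring's theorem for a unital positive map on $C(\TT)$. The approach goes through the positive-definite sequence of Taylor coefficients. Write $H(w)=\sum_{n\ge0}w^nA_n$ with $A_0=I$; this power series converges in operator norm on $\DD$ because $H$ is analytic. Set $T_0=I$, $T_n=\tfrac12 A_n$ and $T_{-n}=T_n^*$ for $n\ge1$.

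The first step is to show that the Toeplitz kernel $(T_{j-k})_{j,k\ge0}$ is positive semidefinite. For $0<r<1$, the function $\Ree H(re^{it})=\tfrac12(H+H^*)$ is a positive operator-valued continuous function of $t$ whose Fourier coefficients are $r^{|n|}T_n$. Hence for finitely supported $x_0,\dots,x_N\in\HH$,
\[
\sum_{j,k}\ip{r^{|j-k|}T_{j-k}x_k}{x_j}=\frac1{2\pi}\int_0^{2\pi}\ip{\Ree H(re^{it})\,y(t)}{y(t)}\,dt\ge0,
\qquad y(t)=\sum_k e^{-ikt}x_k
\]
(up to the sign convention in the exponent, which I will fix to match $U^{*n}$). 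Letting $r\to1$ gives positivity of $(T_{j-k})$.

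The second step is to build $\KK$ by the standard GNS/Naimark construction. I take the space of finitely supported sequences $\HH$-valued on $\mathbb Z$, equip it with the semi-inner product $\langle x,y\rangle=\sum\ip{T_{j-k}x_k}{y_j}$, quotient by null vectors and complete to get $\KK$. Positivity on all of $\mathbb Z$ follows from the $\ge0$ case by translation invariance. The bilateral shift $S$ preserves this form and is invertible on finitely supported sequences, so it induces a unitary; let $U$ be that unitary or its adjoint, chosen so that the signs come out correctly. The map $Vx=$ (class of $x$ placed at index $0$) is an isometry because $T_0=I$, and $V^*U^{*n}V=T_n$ for $n\ge1$ by construction.

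Finally I check the formula. Since $\norm{U}=1$ and $|w|<1$, the series $\sum w^nU^{*n}$ converges in norm, locally uniformly, so $(I+wU^*)(I-wU^*)^{-1}=I+2\sum_{n\ge1}w^nU^{*n}$. Compressing by $V$ gives $I+2\sum w^nT_n=\sum w^nA_n=H(w)$, with the convergence claim inherited from the series. The main obstacle is the positivity step: checking the limit $r\to1$ and keeping the sign/adjoint conventions consistent, so that it is $U^*$ rather than $U$ that appears. Everything else is routine bookkeeping.
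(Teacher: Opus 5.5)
Your argument is correct, but it follows a different route from the paper. The paper's proof rests on two cited theorems. First, the operator-valued Herglotz representation gives a positive operator-valued measure $\mu$ on $\TT$ with $\mu(\TT)=I$ and $H(w)=\int_{\TT}\frac{\zeta+w}{\zeta-w}\,d\mu(\zeta)$. Second, Naimark's theorem dilates $\mu$ to a spectral measure $E$ with $\mu=V^*EV$. The paper then sets $U=\int_{\TT}\zeta\,dE(\zeta)$ and expands the kernel.

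You never pass through a measure. You show directly that the coefficient sequence $(T_n)_{n\in\mathbb{Z}}$ is positive definite, using $\Ree H\ge0$ on circles of radius $r<1$. You then run the Kolmogorov/GNS construction with the bilateral shift. This is in effect a proof from scratch of Sz.-Nagy's dilation theorem for positive-definite operator sequences on $\mathbb{Z}$.

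The two approaches buy different things:
\begin{itemize}
\item The paper's version is shorter and produces the representing measure along the way.
\item Yours is self-contained and avoids both the Herglotz and Naimark theorems. It actually reproves the Herglotz representation, via $\mu=V^*E_UV$ where $E_U$ is the spectral measure of $U$. It also works directly with the Taylor coefficients, which is all that the positive-layer transfer theorem uses.
\end{itemize}

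The step you flag as the main obstacle is harmless. The quadratic forms are finite sums, so the limit $r\to1$ is immediate. The conventions also close up with no choice needed. Taking $y(t)=\sum_k e^{ikt}x_k$ gives exactly $\sum_{j,k}r^{|j-k|}\ip{T_{j-k}x_k}{x_j}$. With the forward shift $(Sx)_j=x_{j-1}$ one gets $\ip{S^{*n}Vx}{Vy}=\ip{T_nx}{y}$, so you can take $U=S$.
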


\begin{proof}
The operator-valued Herglotz representation gives a positive
operator-valued measure \(\mu\) on \(\TT\), with \(\mu(\TT)=I\), such
that
\[
 H(w)=\int_{\TT}\frac{\zeta+w}{\zeta-w}\,d\mu(\zeta).
\]
There is no additional imaginary constant because \(H(0)=I\) is
self-adjoint.  By Naimark's dilation theorem, there are a spectral
measure \(E\) on \(\TT\), acting on a Hilbert space \(\KK\), and an
isometry \(V:\HH\to\KK\) such that
\[
 \mu(\Delta)=V^*E(\Delta)V
\]
for every Borel set \(\Delta\subset\TT\).  Put
\(U=\int_{\TT}\zeta\,dE(\zeta)\).  Since
\[
 \frac{\zeta+w}{\zeta-w}
 =\frac{1+w\overline\zeta}{1-w\overline\zeta}
 =1+2\sum_{n=1}^{\infty}w^n\overline\zeta^{,n},
\]
we obtain \eqref{eq:herglotz-dilation}.  Local operator-norm convergence
follows from \(\norm{U}=1\).
\end{proof}

We now isolate the transfer principle that contains the main argument.

\begin{theorem}[Positive-layer transfer]
\label{thm:positive-layer-transfer}
Let \(\cA\subset C(X)\) be a uniform algebra, let
\(\theta:\cA\to B(\HH)\) be a bounded unital homomorphism, and let
\(P:\cA\to B(\HH)\) be a bounded complex-linear map satisfying
\begin{equation}
 P(1)=2I
 \label{eq:layer-mass}
\end{equation}
and
\begin{equation}
 \Ree h\geq0\ \text{on }X
 \quad\Longrightarrow\quad
 \Ree P(h)\geq0.
 \label{eq:layer-positive}
\end{equation}
Suppose that, for every \(f\in\cA\) and every \(n\geq1\),
\begin{equation}
 \bigl[P(f^n)-\theta(f^n),\,\theta(f)^*\bigr]=0.
 \label{eq:positive-layer-commutation}
\end{equation}
Then \(\norm{\theta}\leq2\).
\end{theorem}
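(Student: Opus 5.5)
The plan is to fix \(f\in\cA\) with \(\norm{f}<1\), put \(T=\theta(f)\), and show \(\norm{T}\leq2\). This suffices, because \(\norm{\theta}\) is the supremum of \(\norm{\theta(f)}\) over such \(f\); the case \(\norm f=1\) follows by replacing \(f\) with \(rf\) and letting \(r\uparrow1\). The aim is to encode the whole sequence \(\bigl(P(f^n)\bigr)_{n\geq1}\) as compressions of powers of a single unitary. Then the commutation hypothesis \eqref{eq:positive-layer-commutation} will put us in the setting of the Lorist--Schwenninger perturbation lemma.

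\textbf{Step 1 (a Herglotz function from \(P\)).} For \(w\in\DD\), the element \(1-wf\) is invertible in \(\cA\), since its Neumann series converges because \(\norm{wf}<1\). So
\[
 h_w=\frac{1+wf}{1-wf}=1+2\sum_{n\geq1}w^nf^n\in\cA .
\]
Since \(|wf|<1\) pointwise on \(X\), we have \(\Ree h_w\geq0\) on \(X\). Define \(H(w)=\tfrac12P(h_w)\). Because \(P\) is bounded and the series converges in \(\cA\), \(H\) is analytic with
\[
 H(w)=\tfrac12P(1)+\sum_{n\geq1}w^nP(f^n)=I+\sum_{n\geq1}w^nP(f^n),
\]
using \eqref{eq:layer-mass}. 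By \eqref{eq:layer-positive}, \(\Ree H(w)\geq0\), and \(H(0)=I\). The Herglotz--Naimark dilation (\Cref{thm:herglotz}) then gives an isometry \(V\) and a unitary \(U\) on some \(\KK\). Comparing Taylor coefficients with \eqref{eq:herglotz-dilation} yields
\[
 P(f^n)=2V^*U^{*n}V\qquad(n\geq1).
\]

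\textbf{Step 2 (the perturbation structure).} Set \(R_n=P(f^n)-\theta(f^n)=P(f^n)-T^n\). Then
\[
 T^n+R_n=2V^*U^{*n}V,
\]
with each \(R_n\) commuting with \(T^*\) by \eqref{eq:positive-layer-commutation}. In particular \(\norm{T^n+R_n}\leq2\) for all \(n\), and \(R_n^*\) commutes with \(T\). So \(T^{*n}\) equals \(2V^*U^nV\) minus an operator commuting with \(T\). Equivalently, \(T\) sits in the ``positive layer'' generated by the dilation \((V,U)\), up to a correction in the commutant of \(T^*\).

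\textbf{Step 3 (apply Lorist--Schwenninger).} I expect this step to be the main obstacle: matching the data \(\{T^n+R_n=2V^*U^{*n}V\}\) with the hypotheses of the Lorist--Schwenninger perturbation lemma. As I recall it, that lemma is the device that improves the Crouzeix--Palencia bound \(1+\sqrt2\) to \(2\). It does so by exploiting a correction term commuting with \(T^*\) (rather than only a norm bound on \(T+R\)), via the identity
\[
 T^*(T+R)=T^*T+RT^*
\]
or its iterated version. My first attempt would use only \(n=1\). Write \(2V^*U^*V=T+R_1\) with \([R_1,T^*]=0\), and compute \(\norm{Tx}^2\) using
\[
 \ip{(T+R_1)x}{T x}-\ip{R_1x}{Tx}.
\]
Here \(R_1\) is moved across \(T^*\) to control \(\ip{R_1x}{Tx}=\ip{x}{R_1^*Tx}\). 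If that is insufficient, I would bring in the higher relations \(n\geq2\), i.e.\ the full positive-definite sequence \(V^*U^{*n}V\), to bound the cross term. This should give \(\norm{Tx}\leq2\norm{x}\), and hence \(\norm{\theta}\leq2\).
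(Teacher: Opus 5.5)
\textbf{Verdict: there is a genuine gap, and it is in Step 3.} Your Steps 1 and 2 match the paper's argument: the Cayley element $h_w$, the Herglotz--Naimark dilation, coefficient comparison giving $P(f^n)=2V^*U^{*n}V$, and the errors $R_n=P(f^n)-\theta(f^n)$ commuting with $\theta(f)^*$. Step 3, however, is not a proof. You never actually invoke \Cref{lemma:LS}. The hands-on substitute you sketch cannot succeed, because it never uses the one hypothesis that controls growth: the uniform bound \eqref{eq:uniform-errors} on the errors.

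The relations you propose to exploit are compatible with $\norm{T}$ arbitrarily large. These relations are $T^n+R_n=2V^*U^{*n}V$ for \emph{all} $n\geq1$, together with $[R_n,T^*]=0$ and $\norm{T^n+R_n}\leq2$. For a counterexample, take $T=cI$ with $c>2$ and $R_n=-c^nI$. Let $\KK=\HH\otimes\ell^2(\mathbb{Z})$, $U=I\otimes S$ with $S$ the bilateral shift, and $Vx=x\otimes e_0$. Then $V^*U^{*n}V=0$ for $n\geq1$, so every identity and every commutation relation holds, yet $\norm{T}=c$. Consequently neither the $n=1$ computation of $\norm{Tx}^2$ via the cross term $\ip{R_1x}{Tx}$, nor ``bringing in the higher relations,'' can close the argument. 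Some control of $\sup_n\norm{R_n}$ must enter.

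The missing step is short, and the ``matching'' you flag as the main obstacle is immediate. Since $\norm{f^n}_X\leq1$, you get $\norm{R_n}\leq\norm{P}+\norm{\theta}$ for every $n$. Now apply \Cref{lemma:LS} with $\theta(f)^*$ in place of $T$, $Q=U$, and $E_n=R_n$. This fits the lemma because $(\theta(f)^*)^{*n}=\theta(f)^n=\theta(f^n)$, so $E_n=2V^*U^{*n}V-\theta(f^n)=R_n$. The commutation hypothesis \eqref{eq:commutation} is exactly \eqref{eq:positive-layer-commutation}. The lemma then gives $\norm{\theta(f)}=\norm{\theta(f)^*}\leq2$.

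Equivalently, in your adjointed form of Step 2, take $T=\theta(f)$, $Q=U^*$ and $E_n=R_n^*$. No direct estimate of cross terms is needed; all the analytic work is inside the lemma.
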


The proof of the above result uses the following Lorist–Schwenninger perturbation lemma [11, Lemma 1].

\begin{lemma}[The Lorist–Schwenninger perturbation lemma]
\label{lemma:LS}
Let $\HH$ and $\KK$ be Hilbert spaces. Let
$T\in B(\HH)$, let $Q\in B(\KK)$ with $\|Q\| \le 1$, and let
$V\in B(\HH,\KK)$ be an isometry.  Suppose that the operators
\begin{equation}
 E_n=2V^*Q^{*n}V-T^{*n},
 \qquad n\geq1,
 \label{eq:errors}
\end{equation}
satisfy
\begin{equation}
 M:=\sup_{n\geq1}\norm{E_n}<\infty
 \label{eq:uniform-errors}
\end{equation}
and
\begin{equation}
 E_nT=TE_n,
 \qquad n\geq1.
 \label{eq:commutation}
\end{equation}
Then $\norm{T} \le 2$.
\end{lemma}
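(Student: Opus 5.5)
The plan is to extract the bound from a single inner-product identity, using the isometry $V$, the contraction $Q$ and the commutation \eqref{eq:commutation} to move the error terms $E_n$ onto one side where they can be controlled. The starting point is the defining relation \eqref{eq:errors},
\[
 T^{*n}=2V^*Q^{*n}V-E_n,\qquad n\ge1 .
\]
Pair this with $T^{*n}y$ for a unit vector $y\in\HH$. For the error term, take adjoints and use $E_nT=TE_n$, hence $E_nT^n=T^nE_n$, to get $\ip{E_ny}{T^{*n}y}=\ip{T^nE_ny}{y}=\ip{E_nT^ny}{y}$. This gives
\begin{equation*}
 \norm{T^{*n}y}^2=2\ip{Q^{*n}Vy}{VT^{*n}y}-\ip{E_nT^ny}{y}.
\end{equation*}
Since $\norm{Q}\le1$ and $V$ is an isometry, it follows that
\[
 \norm{T^{*n}y}^2\le 2\norm{T^{*n}y}+M\norm{T^n}.
\]
Taking the supremum over $y$ gives $\norm{T^n}^2\le(2+M)\norm{T^n}$. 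Hence $\sup_n\norm{T^n}\le 2+M$, so $T$ is power bounded.

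Power boundedness alone only yields a bound of the form $2+M$, so the second step is to remove the dependence on $M$. The first attempt will be to apply the same identity not to $T$ itself but to a suitably amplified operator. For instance, replace $\HH$ by $\ell^2$-sums or tensor powers $\HH^{\otimes k}$, with the corresponding $V^{\otimes k}$ and $Q^{\otimes k}$. Then compare $\norm{T}^k$ with quantities controlled by the identity, so that after taking $k$-th roots the $M$-term becomes negligible while the factor $2$ survives.

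The second option is Cesàro averaging. Average the identity over $n=1,\dots,N$ (or with Fejér weights), using the power bound from the first step to keep all mixed terms $\ip{E_nT^ny}{y}$ uniformly bounded, and normalise so that their contribution is $o(1)$. In both approaches the key structural input is again the commutation \eqref{eq:commutation}: it lets the error operator be moved past powers of $T$ without generating uncontrolled commutators.

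Finally, we reduce from the averaged or amplified statement back to $\norm{T}\le2$. To do this, pick $y$ with $\norm{T^*y}$ close to $\norm{T}$ and show that the main term $2\ip{Q^{*n}Vy}{VT^{*n}y}$ dominates $\norm{T^{*n}y}^2$ in the limit.

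I expect the main obstacle to be the second step: making the error contribution vanish rather than merely stay bounded. The naive estimate above loses exactly the additive $M$. One must find the right combination of powers (amplification or averaging) in which the $E_n$ terms scale strictly more slowly than the principal term. This is where the commutation hypothesis has to be used in an essential way, beyond the single step of passing $E_n$ through $T^n$.
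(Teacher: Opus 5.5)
Your first step is correct but carries no real content. It is just $\norm{T^{*n}}\le 2\norm{V^*Q^{*n}V}+\norm{E_n}\le 2+M$, i.e.\ power boundedness, and the commutation plays no role in it. Everything that would actually give $\norm{T}\le2$ is deferred to a ``second step''. That step is only a list of possible strategies (tensor amplification, Ces\`aro averaging, an almost-maximising $y$), none of which is carried out, and you flag it yourself as unresolved. The paper does not prove this lemma either; it imports it from Lorist--Schwenninger \cite[Lemma~1]{LoristSchwenninger}. So your argument has to stand on its own, and as written it proves only power boundedness.

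More importantly, the strategies cannot work as stated. As far as they are specified, they only use the termwise bound $\norm{V^*Q^{*n}V}\le1$ for each $n$ separately, together with commutation and $M<\infty$. Under those hypotheses the conclusion is false.

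Take $\HH=\CC^2$ and $T=\left(\begin{smallmatrix}1&2\\0&-1\end{smallmatrix}\right)$, so that $T^2=I$ and $\norm{T}=1+\sqrt2$. Let $D_n=\tfrac12(T^*-T)=\left(\begin{smallmatrix}0&-1\\1&0\end{smallmatrix}\right)$ for odd $n$ and $D_n=0$ for even $n$. Then $\norm{D_n}\le1$, and $2D_n-T^{*n}$ equals $-T$ or $-I$, so it commutes with $T$ and is uniformly bounded. Your identity $\norm{T^{*n}y}^2=2\ip{D_ny}{T^{*n}y}-\ip{E_nT^ny}{y}$ holds for all $n$ and $y$. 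Hence no averaging, limiting, or ``main term dominates'' argument built on these relations can yield $\norm{T}\le2$.

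What excludes this example from the lemma is the coupling between different $n$. Since $D_1$ is unitary, $V^*Q^*V=D_1$ with $V$ isometric and $\norm{Q}\le1$ forces $Q^*V=VD_1$. Hence $V^*Q^{*n}V=D_1^n$, which contradicts $D_2=0$.

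The missing ingredient is therefore the joint positivity of the moment sequence $C_n=V^*Q^{*n}V$ with $C_0=V^*V=I$. Concretely, the block Toeplitz forms $\sum_{j,k}\ip{C_{k-j}x_k}{x_j}$, with $C_{-m}=C_m^*$, are nonnegative; equivalently $\Ree\bigl(I+2\sum_{n\ge1}w^nC_n\bigr)\ge0$ on $\DD$. This is exactly the Herglotz structure the paper builds before invoking the lemma. It must be combined with the commutation to cancel the errors; commutation alone is not the key input.

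Tensor powers also do not inherit the hypotheses. The operator $T=2\left(\begin{smallmatrix}0&1\\0&0\end{smallmatrix}\right)$ satisfies them with $V=I$, $Q=\tfrac12T$ and $E_n=0$, while $\norm{T\otimes T}=4$.
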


\begin{proof}[Proof of Theorem~\ref{thm:positive-layer-transfer}]
Fix \(f\in\cA\) with \(\norm{f}_X\leq1\).  For \(w\in\DD\), define
the Cayley function
\[
 c_w=(1+wf)(1-wf)^{-1}
 =1+2\sum_{n=1}^{\infty}w^nf^n
\]
and put
\begin{equation}
 H_f(w)=\frac12P(c_w).
 \label{eq:H-from-layer}
\end{equation}
The inverse and the series belong to \(\cA\), by the Neumann series.
Moreover, \(H_f(0)=I\), and pointwise on \(X\) we have
\[
 \Ree c_w
 =\frac{1-|w|^2|f|^2}{|1-wf|^2}
 \geq0.
\]
It follows from \eqref{eq:layer-positive} that
\(\Ree H_f(w)\geq0\).  By \Cref{thm:herglotz}, there are a unitary
\(U\in B(\KK)\) and an isometry \(V:\HH\to\KK\) such that
\[
 H_f(w)=I+2\sum_{n=1}^{\infty}w^nV^*U^{*n}V.
\]
On the other hand, boundedness of \(P\) and
\eqref{eq:H-from-layer} give
\[
 H_f(w)=I+\sum_{n=1}^{\infty}w^nP(f^n).
\]
Comparison of Taylor coefficients yields
\begin{equation}
 2V^*U^{*n}V=P(f^n),
 \qquad n\geq1.
 \label{eq:layer-moments}
\end{equation}

Set \(R=\theta(f)^*\).  Since \(\theta\) is a homomorphism,
\[
 R^{*n}=\theta(f)^n=\theta(f^n).
\]
For \(n\geq1\), define
\[
 E_n
 =2V^*U^{*n}V-R^{*n}
 =P(f^n)-\theta(f^n).
\]
By \eqref{eq:positive-layer-commutation}, \(E_n\) commutes with \(R\).
The errors $E_n$ are uniformly bounded, since
\[
 \norm{E_n}
 \leq \norm{P}\,\norm{f^n}_X
      +\norm{\theta}\,\norm{f^n}_X
 \leq \norm{P}+\norm{\theta}.
\]
Thus all the hypotheses of the Lorist--Schwenninger perturbation Lemma \ref{lemma:LS}
%\cite[Lemma~1]{LoristSchwenninger} 
are satisfied, with \(T=R\) and
\(Q=U\).  Consequently,
\[
 \norm{\theta(f)}=\norm{R}\leq2.
\]
Taking the supremum over the closed unit ball of \(\cA\) proves the
claim.
\end{proof}

\begin{proof}[Proof of \Cref{thm:abstract-sharp}]
The map \(\Phi\) is complex-linear, unital, and contractive.  By
\Cref{lem:unital-real-positive}, it preserves positive real parts.
Consequently, \(P=2\Phi\) satisfies
\eqref{eq:layer-mass} and \eqref{eq:layer-positive}.

For \(f\in\cA\) and \(n\geq1\),
\begin{equation}
 P(f^n)-\theta(f^n)
 =\theta(\alpha(f^n))^*.
 \label{eq:abstract-defect}
\end{equation}
Because \(\cA\) is commutative, both \(\theta(\cA)\) and its adjoint
algebra
\[
 \theta(\cA)^*=\{\theta(h)^*:h\in\cA\}
\]
are commutative.  Hence
\[
 \bigl[\theta(\alpha(f^n))^*,\,\theta(f)^*\bigr]=0.
\]
The commutation condition \eqref{eq:positive-layer-commutation} follows,
and \Cref{thm:positive-layer-transfer} gives \(\norm{\theta}\leq2\).
\end{proof}

\begin{remark}
The boundedness of \(\alpha\), rather than its contractivity, is enough:
it ensures that the defect map in \eqref{eq:abstract-defect} is bounded.
\end{remark}

\bigskip

\noindent {\bf Acknowledgement}. The authors acknowledge the support of CDP C2EMPI and institutional partners for the R-CDP-24-004-C2EMPI project, the EU-COST actions and EPSRC (grant EP/Y008375/1).

\bigskip
\small

\noindent
\textsc{Catalin Badea}\\
Department of Mathematics and Statistics, University of Reading,
Reading, United Kingdom\\
Laboratoire Paul Painlev\'e, Universit\'e de Lille,
Lille, France\\[2mm]
\texttt{c.badea@reading.ac.uk}, \texttt{cbadea@univ-lille.fr}

\medskip

\noindent
\textsc{Ryan O'Loughlin}\\
Department of Mathematics and Statistics, University of Reading,
Reading, United Kingdom\\[2mm]
\texttt{r.d.oloughlin@reading.ac.uk}

\medskip

\noindent
\textsc{Jani Virtanen}\\
Department of Physics and Mathematics, University of Eastern Finland,
80100 Joensuu, Finland\\
University of Reading, Reading, United Kingdom\\
University of Helsinki, Helsinki, Finland\\[2mm]
\texttt{jani.virtanen@uef.fi},
\texttt{j.a.virtanen@reading.ac.uk},
\texttt{jani.virtanen@helsinki.fi}


\begin{thebibliography}{99}
\small
\raggedright
\setlength{\itemsep}{2pt}
\setlength{\parskip}{0pt}

\bibitem{BadeaBeckermann}
C.~Badea and B.~Beckermann,
Spectral sets,
in \emph{Handbook of Linear Algebra}, 2nd ed.,
CRC Press, Boca Raton, FL, 2014, Chapter~37, pp.~37-1--37-26.

\bibitem{BadeaCrouzeixDelyon}
C.~Badea, M.~Crouzeix, and B.~Delyon,
Convex domains and \(K\)-spectral sets,
\emph{Math. Z.} \textbf{252} (2006), no.~2, 345--365.

\bibitem{BickelEtAl}
K.~Bickel, P.~Gorkin, A.~Greenbaum, T.~Ransford,
F.~L.~Schwenninger, and E.~Wegert,
Crouzeix's conjecture and related problems,
\emph{Comput. Methods Funct. Theory} \textbf{20} (2020),
no.~3--4, 701--728.

\bibitem{COR}
R.~Clou\^atre, M.~Ostermann, and T.~Ransford,
An abstract approach to the Crouzeix conjecture,
\emph{J. Operator Theory} \textbf{90} (2023), no.~1, 209--221.

\bibitem{CrouzeixIEOT}
M.~Crouzeix,
Bounds for analytical functions of matrices,
\emph{Integral Equations Operator Theory} \textbf{48} (2004),
no.~4, 461--477.

\bibitem{CrouzeixJFA}
M.~Crouzeix,
Numerical range and functional calculus in Hilbert space,
\emph{J. Funct. Anal.} \textbf{244} (2007), no.~2, 668--690.

\bibitem{CrouzeixPalencia}
M.~Crouzeix and C.~Palencia,
The numerical range is a \((1+\sqrt2)\)-spectral set,
\emph{SIAM J. Matrix Anal. Appl.} \textbf{38} (2017),
no.~2, 649--655.

\bibitem{DelyonDelyon}
B.~Delyon and F.~Delyon,
Generalization of von Neumann's spectral sets and integral representation
of operators,
\emph{Bull. Soc. Math. France} \textbf{127} (1999), no.~1, 25--41.

\bibitem{HartzMcCarthy}
M.~Hartz and J.~E.~McCarthy,
From Clou\^atre--Ostermann--Ransford to Okubo--Ando,
arXiv:2606.02922v1 [math.FA], 2026.

\bibitem{Jin}
S.~Jin,
The numerical range is a \(2\)-spectral set,
\emph{Preprints} 2026, 202607.1919, version~4,
\href{https://doi.org/10.20944/preprints202607.1919.v4}
{doi:10.20944/preprints202607.1919.v4}.

\bibitem{LoristSchwenninger}
E.~Lorist and F.~L.~Schwenninger,
A solution to Crouzeix's conjecture,
arXiv:2608.03841v2 [math.CA], 2026,
\href{https://doi.org/10.48550/arXiv.2608.03841}
{doi:10.48550/arXiv.2608.03841}.

\bibitem{Luo}
Q.~Luo,
Two boundary-certificate proofs of the Crouzeix theorem: fiberwise
Fourier recurrence and defect--Gram telescoping,
\emph{Preprints} 2026, 202608.1661, version~2,
\href{https://doi.org/10.20944/preprints202608.1661.v2}
{doi:10.20944/preprints202608.1661.v2}.

\bibitem{PaulsenBook}
V.~Paulsen,
\emph{Completely Bounded Maps and Operator Algebras},
Cambridge Studies in Advanced Mathematics, vol.~78,
Cambridge University Press, Cambridge, 2002.

\end{thebibliography}
\end{document}